\newtheorem{theorem}{Theorem}[section]
\newtheorem{lemma}[theorem]{Lemma}
\numberwithin{equation}{section}
\title{\sffamily Whitney's theorem for local anisotropic polynomial $L_p$-approximation, $0<p<1$
}
\author{Dinh D\~ung$^a$\footnote{Corresponding author. Email: dinhzung@gmail.com}, Nguyen Van D\~ung$^b$ 
and Nguyen Dinh Hoa$^c$ \\[2ex]
$^{a,c}$ Vietnam National University, Hanoi, Information Technology Institute \\
144, Xuan Thuy, Hanoi, Vietnam  \\[1.5ex]
$^b$University of Transport and Communications\\
Lang Thuong, Dong Da, Hanoi, Vietnam}
\date{\ttfamily May 27, 2013 -- Version 0.2}
\def\II{{\mathbb I}}
\def\ZZ{{\mathbb Z}}
\def\NN{{\mathbb N}}
\def\RR{{\mathbb R}}
\def\Pp{{\mathcal P}}
\newlength{\fixboxwidth}
\begin{document}
\maketitle

\begin{abstract}
Dinh D\~ung and T. Ullrich have proven a multivariate Whitney's theorem for
the local anisotropic  polynomial approximation in $L_p(Q)$ for $1 \le p \le \infty$,
where $Q$ is a $d$-parallelepiped in $\RR^d$ with sides parallel to the coordinate axes. They considered the 
error of best approximation of a function $f$ by algebraic polynomials of fixed degree at most $r_i - 1$ in variable $x_i,\ i=1,...,d$. The convergence rate of the approximation error when the size of $Q$ going to $0$ is characterized by a so-called total mixed modulus of smoothness. The method of proof used by these authors is not suitable to the case 
$0 <p<1$. In the present paper, by a different method we proved this theorem for $0< p \le \infty$.

\medskip
\noindent
{\bf Keywords}\ Whitney's theorem; Anisotropic approximation by polynomials; 
Total mixed modulus of smoothness; Marchaud's inequality. 

\medskip
\noindent
{\bf Mathematics Subject Classifications (2010)} \ 41A10; 41A50; 41A63.   
\end{abstract}

\section{Introduction}

Let $\omega_r(f,\cdot)_{p,I}$ be the $r$th modulus of smoothness of  a function $f \in L_p(I)$, and $E_r(f)_{p,I}$ is the error of best $L_p$-approximation $E_r(f)_{p,I}$ of 
$f$ by algebraic polynomials of degree at most $r-1$, where $I:=[a,b]$ is an interval in $\RR$.
Whitney's theorem establishes a convergence 
characterization for a local polynomial approximation when the degree $r-1$ of polynomials is fixed and the 
length $\delta:= b-a$ of the interval $I$ is small.
Namely, if $0 < p \le \infty$, we have for every $f \in L_p(I)$,
\begin{equation} \nonumber
C'\, \omega_r(f,\delta)_{p,I} 
\ \le \ 
E_r(f)_{p,I}  
\ \le \ 
C\, \omega_r(f,\delta)_{p,I}
\end{equation} 
with constant $C, C'$ depending only on $r$ and $p$. 
This result was first proved by Whitney 
\cite{Wh57} for $p=\infty$ and then extended by Brudny\u{\i} \cite{Br64},  to $1 \le p <\infty$ and by Storozhenko \cite{S} to $0 < p < 1$. Whitney's theorem was generalized for multivariate isotropic approximations in 
\cite{Br70}, \cite{Br70_2}, \cite{StOs78} and other. We refer the reader to \cite{HeNe07}, \cite{DU11} for surveys on univariate and multivariate Whitney's theorem and recent achievements on this topic.

The present paper is a continuation of the paper \cite{DU11}. In the latter one, Dinh D\~ung and T. Ullrich have proven a multivariate Whitney's theorem for
the local anisotropic  polynomial approximation in $L_p(Q)$ for $1 \le p \le \infty$,
where $Q$ is a $d$-parallelepiped in $\RR^d$ with sides parallel to the coordinate axes. They considered the 
error of best approximation of a function $f$ by algebraic polynomials of fixed degree at most $r_i - 1$ in variable $x_i,\ i=1,...,d$. The convergence rate of the approximation error when the size of $Q$ going to $0$ is characterized by a so-called total mixed modulus of smoothness. The method of proof in \cite{DU11} based on application of  a technique in \cite{JS76}, is not suitable to the case $0 <p<1$. In this paper, by a different method we prove this theorem for $0< p \le \infty$. 

To formulate the main result of the present paper we preliminarily introduce some necessary notations.
As usual, $\mathbb{N}$ is reserved for the natural numbers, by
$\mathbb{Z}$ we denote the set of all integers, and by $\RR$ the real numbers. 
Furthermore, $\ZZ_+$ and $\RR_+$ denote the set of non-negative integers and real numbers, respectively. 
Elements $x$ of $\RR^d$ are denoted by $x = (x_1,...,x_d)$. For a domain $D \subset {\RR}^d$, let  
$L_p(D)$, $0<p\leq \infty$, be the quasi-normed space 
of functions on $D$ with the usual $p$-th integral quasi-norm 
$\|\cdot\|_{p,D}$ to be finite if $0 < p < \infty$, whereas we use the ess sup norm if $p = \infty$. 

For $r \in {\NN}^d$, denote by $\Pp_r$  the set of algebraic polynomials of degree at most $r_i - 1$ 
at variable $x_i, \ i \in [d]$, where $[d]$ stands for the natural numbers from $1$ to $d$. 
We are interested in the $L_p$-approximation of a function
$f \in L_p(Q)$ defined on a $d$-parallelepiped 
$$
      Q := [a_1,b_1]\times ... \times [a_d,b_d]
$$
by polynomials from $\mathcal{P}_r$. 
The error of the best approximation of $f \in L_p(Q)$ by polynomials from $\Pp_r$
is measured by
\begin{equation*}  
E_r(f)_{p,Q} := \inf_{\varphi \in \Pp_r} \ \|f -\varphi\|_{p,Q}. 
\end{equation*}

For $r \in {\ZZ}_+$, $h\in \RR$, and a univariate functions $f$, the 
$r$th difference operator $\Delta_h^r(f)$ is defined by 
\begin{equation*}
\Delta_h^r(f,x) := \
\sum_{j =0}^r (-1)^{r - j} \binom{r}{j} f(x + jh)\quad,\quad \Delta_h^0(f,x) := f(x).
\end{equation*}
For $r \in {\ZZ}^d_+$, $h\in \RR^d$ and a $d$-variate function $f:\RR^d \to \RR$,  
the mixed $r$th difference operator $\Delta_h^r$ is defined by 
\begin{equation*}
\Delta_h^r := \
\prod_{i = 1}^d \Delta_{h_i}^{r_i},
\end{equation*}
where the univariate operator
$\Delta_{h_i}^{r_i}$ is applied to the univariate function $f$ by considering $f$ as a 
function of  variable $x_i$ with the other variables held fixed. 
Let
\begin{equation*}
\omega_r(f,t)_{p, Q}:= \sup_{|h_i| \le t_i, i \in [d]} 
\|\Delta_h^r(f)\|_{p,Q_{rh}}, \  t \in {\RR}^d_+,
\end{equation*} 
be the mixed $r$th modulus of smoothness of $f$,
where for $y,h \in \RR^d$, we write $yh:= (y_1h_1,...,y_dh_d)$ and
 $Q_y := \{x \in Q: x_i, x_i + y_i \in [a_i,b_i], \  i \in [d]\}$.
For $r \in {\ZZ}^d_+$ and $e \subset [d]$, 
denote by $r(e) \in {\ZZ}^d_+$ the vector with $r(e)_i = r_i, i \in e$ and 
$r(e)_i = 0, i \notin e$ \ ($r(\varnothing) = 0$).
We define the {\em total mixed modulus of smoothness of order $r$} \cite{Da96},\cite{DU11} by
\begin{equation*} 
\Omega_r(f,t)_{p,Q}
:= \ 
 \sum_{e \subset [d], e \ne \varnothing} \omega_{r(e)}(f,t)_{p,Q}, \ t \in {\RR}^d_+.
\end{equation*}
Let the size $\delta(Q)$ of $Q$ be defined as $\delta(Q):= (b_1-a_1,...,b_d-a_d) \in {\RR}^d_+$. 

The main result of the present paper is read as follows.

\begin{theorem} \label{Theorem: Whitney}
Let $0 < p \le \infty$, $r \in {\NN}^d$. Then there are constants $C, C'$ depending 
only on $r,d, p$ such that for every $f \in L_p(Q)$,
\begin{equation} \label{ineq:WhitneyIneq} 
C' \Omega_r(f,\delta)_{p,Q}
\ \le \ 
E_r(f)_{p,Q}  
\ \le \ 
C \Omega_r(f,\delta)_{p,Q}, 
\end{equation}
where $\delta = \delta(Q)$ is the size of $Q$. 
\end{theorem}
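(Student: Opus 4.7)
For any $\varphi\in\Pp_r$ and any nonempty $e\subset[d]$, the mixed difference $\Delta_h^{r(e)}$ annihilates $\varphi$, since each factor $\Delta_{h_i}^{r_i}$ with $i\in e$ kills polynomials of degree less than $r_i$ in $x_i$. Expanding $\Delta_h^{r(e)}(f-\varphi)$ as an alternating sum of $\prod_{i\in e}(r_i+1)$ translates of $f-\varphi$ and applying the $p$-triangle inequality in $L_p(Q)$ (with constant $2^{1/p-1}$ when $p<1$) gives $\omega_{r(e)}(f,\delta)_{p,Q}=\omega_{r(e)}(f-\varphi,\delta)_{p,Q}\le C_{r,p}\|f-\varphi\|_{p,Q}$. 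Infimizing over $\varphi\in\Pp_r$ and summing over nonempty $e\subset[d]$ yields the left inequality of \eqref{ineq:WhitneyIneq}.

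\textbf{Upper bound, plan.} I would argue by induction on the dimension $d$, using a density reduction to smooth $f$ as a first step. Since $C^\infty(Q)\cap L_\infty(Q)$ is dense in $L_p(Q)$ for every $0<p\le\infty$ and both sides of \eqref{ineq:WhitneyIneq} are continuous in $f$ with respect to the $L_p$-quasi-norm, it suffices to prove the right inequality for smooth $f$ and then pass to the limit. The base case $d=1$ is the classical univariate Whitney theorem \cite{Wh57,Br64,S}. For the inductive step, write $Q=I_1\times Q'$, let $T_1$ be the slicewise $L_2$-orthogonal projection onto $\Pp_{r_1}$ in $x_1$ (well-defined and linear on smooth $f$), so that $T_1f(x)=\sum_{k=0}^{r_1-1}a_k(\bar x_1)\,x_1^k$ with $a_k(\bar x_1)=\langle f(\cdot,\bar x_1),\phi_k\rangle_{L_2(I_1)}$. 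Apply the inductive hypothesis on $Q'$ to each coefficient $a_k$ to obtain $Q_k\in\Pp_{(r_2,\dots,r_d)}(Q')$, and assemble $P(x):=\sum_kQ_k(\bar x_1)\,x_1^k\in\Pp_r$. The $p$-triangle inequality splits $\|f-P\|_{p,Q}^p$ into $\|f-T_1f\|_{p,Q}^p$ and $\|T_1f-P\|_{p,Q}^p$, and the task reduces to bounding each by $C\,\Omega_r(f,\delta)_{p,Q}^p$.

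\textbf{Key input and main obstacle.} The first-stage bound $\|f-T_1f\|_{p,Q}\le C\,\omega_{r(\{1\})}(f,\delta)_{p,Q}$ follows from the \emph{averaged} univariate Whitney inequality valid for all $0<p\le\infty$,
\begin{equation*}
E_r(g)_{p,I}^p \ \le \ \frac{C(r,p)}{|I|}\int_0^{|I|/r}\|\Delta_h^r g\|_{p,I_{rh}}^p\,dh,
\end{equation*}
derivable from the classical univariate Whitney theorem together with Marchaud's inequality in $L_p$ (with exponent $p$ when $p<1$): once the sup in $h$ has been replaced by an integral, applying it pointwise in $\bar x_1$ and invoking Fubini produces the estimate cleanly, in contrast with the naive pointwise-Whitney argument whose output $\int\omega_{r_1}(f(\cdot,\bar x_1),\delta_1)_p^p\,d\bar x_1$ is strictly larger than $\omega_{r(\{1\})}(f,\delta)_{p,Q}^p$. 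For the second-stage bound, the linearity of $T_1$ yields $\Delta_{h'}^{r'(e')}a_k(\bar x_1)=\langle\Delta_{h'}^{r'(e')}f(\cdot,\bar x_1),\phi_k\rangle_{L_2(I_1)}$ for every $\varnothing\ne e'\subset\{2,\dots,d\}$, so that $\omega_{r'(e')}(a_k,\delta')_{p,Q'}$ is controlled by the mixed modulus of $f$ in the same directions (after one further application of the averaged Whitney in $x_1$ to convert the $L_2(I_1)$-pairing into an $L_p(I_1)$ quantity); equivalence of quasi-norms on $\Pp_{r_1}$ and the inductive bound on $\|a_k-Q_k\|_{p,Q'}$ then produce the required estimate. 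The principal obstacle throughout is the non-existence of bounded linear projections from $L_p([a,b])$ onto $\Pp_{r_1}$ for $0<p<1$ (the dual of $L_p$ is then trivial), which is precisely why the $K$-functional method of \cite{DU11,JS76} fails in that range; one must carry out the entire construction on the dense subspace $C^\infty(Q)\cap L_\infty(Q)$, verify that the constants produced by the averaged Whitney and $L_p$-Marchaud inequalities depend only on $r$, $d$, $p$, and only at the end transfer the bound to arbitrary $f\in L_p(Q)$ by density.
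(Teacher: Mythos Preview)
Your lower bound matches the paper's and is correct. Your upper-bound strategy, however, differs fundamentally from the paper's and contains a real gap.

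\textbf{The gap.} You acknowledge that no bounded linear projection $L_p([a,b])\to\Pp_{r_1}$ exists for $0<p<1$, yet your argument still requires exactly such a bound in disguise. Both stages need it. For the first stage, $\|f-T_1f\|_{p,Q}\le C\,\omega_{r(\{1\})}(f,\delta)_{p,Q}$ would require $T_1$ to be near-best in $L_p$ slice by slice with a constant independent of $f$; since $T_1$ fixes $\Pp_{r_1}$, near-bestness is equivalent to $\|T_1g\|_{p,I_1}\le C\|g\|_{p,I_1}$ uniformly in $g$, which is precisely the nonexistent bounded projection. Restricting to smooth $f$ does not help: the inequality must hold with a constant independent of the particular smooth function, and then density would force it on all of $L_p$. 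For the second stage the same obstruction reappears: commuting $T_1$ with $\Delta_{h'}^{r'(e')}$ gives $\Delta_{h'}^{r'(e')}(T_1f)=T_1(\Delta_{h'}^{r'(e')}f)$, and bounding $\|\Delta_{h'}^{r'(e')}a_k\|_{p,Q'}$ by $\|\Delta_{h'}^{r'(e')}f\|_{p,Q}$ again asks for $\|T_1g\|_p\lesssim\|g\|_p$. Your phrase ``one further application of the averaged Whitney to convert the $L_2$-pairing into an $L_p$ quantity'' does not do this: the pairing $\langle g,\phi_k\rangle$ is an $L_1$-type object, and for $p<1$ there is no uniform control of $\|g\|_{1,I_1}$ by $\|g\|_{p,I_1}$.

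\textbf{What the paper does instead.} The paper avoids any explicit projection. It argues by contradiction and compactness: if the upper bound failed one could find $f_n$ with $E_r(f_n)_{p,\II^d}=\|f_n\|_p=1$ and $\Omega_r(f_n,\mathbf{1})_{p,\II^d}\le 1/n$. A precompactness lemma (built from the mixed Marchaud inequality, approximation by constants on subcubes, and the equivalence $W_r\asymp\Omega_r$) shows $\{f_n\}$ has an $L_p$-convergent subsequence with limit $f$ satisfying $\Omega_r(f,\cdot)_{p,\II^d}\equiv 0$. A separate lemma then forces $f\in\Pp_r$ a.e., contradicting $E_r(f)_{p,\II^d}=1$. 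The compactness route is nonconstructive but sidesteps precisely the projection obstacle that blocks your induction.
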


Theorem \ref{Theorem: Whitney} shows that for $0 < p \le \infty$, $r \in {\NN}^d$, the total mixed modulus of smoothness $\Omega_r(f,t)_{p,Q}$ completely characterizes the convergence rate of the best anisotropic polynomial 
$L_p$-approximation when the degree $r$ of polynomials is fixed and the size $\delta(Q)$ of $Q$ is going to $0$. It may have applications in multivariate approximations of functions with bounded mixed smoothness or differences by piecewise polynomials or splines.

Theorem \ref{Theorem: Whitney} extends  a result of the paper \cite{DU11} proving it for $1 \le p \le \infty$, and is a multivariate generalization of a result of the paper \cite{S} proving it for $d=1$ and $0<p<1$. In the latter paper, to prove her result the author used inductive arguments and Marchaud's inequality \cite{Ma27} to reduce the problem to the approximation of functions by constants.  We will develop this method to prove Theorem \ref{Theorem: Whitney}.  It turns out that in its proof we should overcome certain difficulties by employing some auxiliary results in particular, a version of  Marchaud's inequality for mixed modulus of smoothness, an upper bound for the error of the anisotropic 
approximation by Taylor polynomials for functions from Sobolev spaces of mixed smoothness, a basic relationship between the $r$th mixed difference operators $\Delta_h^r$ and the polynomials from $\Pp_r$ 
(see Lemma \ref{Lemma:a.e.polynomial}),  etc. 

It is worth to notice that there was another proof of Whitney's theorem for $d=1$ and $0<p<1$ given in \cite{SK95} based on a technique used in the original proof of Whitney \cite{Wh57}: he estimated the deviation of  the function from an interpolating polynomial with equally spaced nodes by means of finite differences. It is interesting to develop it to prove Theorem \ref{Theorem: Whitney}. However, this would go beyond the scope of the present paper.

The paper is organized as follows. 
In Section \ref{Marchaud inequality}, we prove Marchaud's inequality for mixed modulus of smoothness and other auxiliary facts. In Section \ref{Whitney's inequality} we prove Theorem \ref{Theorem: Whitney}.

\section{Marchaud's inequality and other auxiliary results} 
\label{Marchaud inequality}

The total mixed modulus of smoothness $\Omega_r(f,t)_{p,Q}$ is not suitable when we want to estimate the error of anisotropic approximations by 
by polynomials from $\mathcal{P}_r$. We therefore introduce a modification as follows.
For $r \in {\ZZ}^d_+$, $h\in \RR^d$ and a 
$d$-variate function $f:\RR^d \to \RR$, 
the mixed $p$-mean modulus of smoothness of order $r(e)$ is given by
\begin{equation*}
w_r(f,t)_{p,Q} 
\ := \ 
\left( \Big(\prod_{i=1}^d t_i^{- 1}\Big) \int_{U(t)} \int_{Q_{rh}}
|\Delta_h^r(f,x)|^p \ dx \, dh \right)^{1/p}, \ t \in \RR^d_+,
\end{equation*}
where $U(t):= \{ h \in {\RR}^d: |h_i| \le t_i, \ i \in [d]\}$, 
with the usual change of the outer mean integral to sup if $p= \infty$. This leads to 
the definition of the {\em total mixed $p$-mean modulus of smoothness of order $r$} by
\begin{equation*} 
W_r(f,t)_{p,Q}
:= \ 
 \sum_{e \subset [d], e \ne \varnothing} w_{r(e)}(f,t)_{p,Q}, \ t \in {\RR}^d_+.
\end{equation*}

We use letters $C, C', C_1, C_2,...$ to denote a positive constant independent of the parameters and/or functions 
which are relevant in the context.

\begin{lemma} \label{lemma:[W_r><Omega_r]}
Let $0 < p \le \infty$, $r \in {\NN}^d$. Then there are constants $C, C'$ depending 
only on $r,d$ such that for every $f \in L_p(Q)$,
\begin{equation*}  
C W_r(f,t)_{p,Q} 
\ \le \ 
\Omega_r(f,t)_{p,Q}  
\ \le \ 
C' W_r(f, t)_{p,Q}, \ t \in {\RR}^d_+. 
\end{equation*}
\end{lemma}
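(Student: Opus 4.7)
My plan is to prove the two inequalities separately, with the easy one following directly from the definitions and the hard one by an induction on $|e|$ driven by a coordinate-wise averaging identity.

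For the easy direction $W_r \le C\,\Omega_r$: for every nonempty $e \subset [d]$ and every $h \in U(t)$, the trivial bound $\|\Delta_h^{r(e)}(f)\|_{p, Q_{r(e)h}} \le \omega_{r(e)}(f,t)_{p,Q}$ holds. Taking the $p$-mean (or sup, when $p=\infty$) over $h \in U(t)$, and noting that $|U(t)| = 2^d \prod_{i=1}^d t_i$, gives $w_{r(e)}(f,t)_{p,Q} \le 2^{d/p}\, \omega_{r(e)}(f,t)_{p,Q}$, and summing over $e$ concludes this half.

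For the hard direction $\Omega_r \le C'\,W_r$ I will proceed by induction on $|e|$, establishing
\begin{equation*}
\omega_{r(e)}(f,t)_{p,Q} \ \le \ C_e \sum_{\emptyset \neq e' \subseteq e} w_{r(e')}(f,t)_{p,Q}
\end{equation*}
for every nonempty $e \subset [d]$. The base case $|e|=1$ reduces by Fubini to the classical univariate equivalence of sup and $p$-mean moduli of smoothness on an interval, known to be valid for $0<p\le\infty$. For the inductive step with $|e|\ge 2$, I will fix some $i\in e$ and exploit the product structure $\Delta_h^{r(e)} = \Delta_{h_i}^{r_i}\,\Delta_{h_{e\setminus\{i\}}}^{r(e\setminus\{i\})}$, viewing $\Delta_h^{r(e)}(f,x)$ as an $r_i$-th univariate difference in coordinate $i$ of the auxiliary function $y_i \mapsto \Delta_{h_{e\setminus\{i\}}}^{r(e\setminus\{i\})}(f,y)$. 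Applying a one-dimensional averaging identity in that single coordinate produces a pointwise estimate of the form
\begin{equation*}
|\Delta_h^{r(e)}(f,x)|^p \ \le \ C \, M_i(h,x) + E_i(h,x),
\end{equation*}
where $M_i$ is a $p$-mean of $|\Delta_{\tilde h}^{r(e)}(f,\tilde x)|^p$ over small perturbations of $(h_i,x_i)$, and $E_i$ is a sum of mixed differences whose multi-index is $r(e')$ for some $e' \subsetneq e$, so that $E_i$ is controlled by the inductive hypothesis applied to such $e'$.

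The main obstacle I anticipate is constructing the one-dimensional averaging identity so that its error $E_i$ decomposes exactly into differences corresponding to strict subsets $e' \subsetneq e$---ensuring the induction closes---together with the systematic replacement of Minkowski's inequality by the $p$-power triangle inequality $|a+b|^p \le |a|^p + |b|^p$ whenever $0<p<1$. Once the pointwise per-coordinate estimate is in place, integrating in the remaining variables, taking the supremum over $h \in U(t)$, applying the inductive hypothesis to the $E_i$-terms, and finally summing the resulting inequalities over all nonempty $e \subset [d]$ yields $\Omega_r(f,t)_{p,Q} \le C'\, W_r(f,t)_{p,Q}$.
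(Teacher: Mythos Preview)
Your easy direction ($W_r \le C\,\Omega_r$) is correct and matches the paper's.

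For the hard direction the paper takes a much simpler route than your induction on $|e|$. It proves the stronger \emph{term-by-term} inequality
\[
\omega_{r(e)}(f,t)_{p,Q} \ \le \ C'\, w_{r(e)}(f,t)_{p,Q}
\]
for each fixed nonempty $e$, with no lower-order error terms whatsoever. The argument is: quote the univariate equivalence $\omega_r(g,t)_{p,I} \le C\, w_r(g,t)_{p,I}$ on an interval (valid for all $0<p\le\infty$, from Petrushev--Popov~\cite{PP87}), then iterate it one coordinate at a time via Fubini. For $d=2$ and $e=\{1,2\}$: with $h_2,x_2$ frozen, apply the univariate inequality in coordinate~$1$ to $x_1 \mapsto \Delta_{h_2}^{r_2}f(x_1,x_2)$; integrate in $x_2$; then repeat the same step in coordinate~$2$. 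This already yields $\omega_{r(e)} \le C' w_{r(e)}$.

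The weak point in your plan is the ``one-dimensional averaging identity'' that is supposed to produce error terms $E_i$ built from $\Delta^{r(e')}$ with $e' \subsetneq e$. No such terms arise from any standard univariate averaging argument: when you apply a coordinate-$i$ identity to the auxiliary function $g(x_i)=\Delta_{h_{e\setminus\{i\}}}^{r(e\setminus\{i\})}f(x)$, you are only manipulating $\Delta_{h_i}^{r_i}$ acting on $g$, so every term that appears is still a full $\Delta^{r(e)}$-difference of $f$ (with perturbed $h_i,x_i$). The other coordinates are untouched, and nothing drops to a strictly smaller $e'$. Thus either you would have to manufacture a nonstandard identity that artificially introduces such terms, or your induction collapses---with $E_i\equiv 0$---to exactly the paper's direct iteration. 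The induction on $|e|$ and the anticipated ``main obstacle'' of controlling the error hierarchy are unnecessary scaffolding; the per-$e$ estimate holds on its own.
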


\begin{proof} It is enough to show that for  $r \in \ZZ^d_+$,
\begin{equation}  
C w_r(f,t)_{p,Q} \label{w_r><omega_r}
\ \le \ 
\omega_r(f,t)_{p,Q}  
\ \le \ 
C' w_r(f, t)_{p,Q}, \ t \in {\RR}^d_+. 
\end{equation}
The first inequality in \eqref{w_r><omega_r} follows directly from the definitions of 
$\omega_r(f,t)_{p,Q} $ and $ w_r(f,t)_{p,Q}$. For simplicity let us prove the second one for $d=2$. This inequality was proven in \cite{PP87} for the univariate case $(d=1)$. Therefore, we have for $|h_i| \le t_i$ and for almost all $x_{i'} \in \II_{i'}$,
\begin{equation} \nonumber 
\| \Delta_{h_i}^{r_i}(f)\|_{p,x_i}^p  
\ \le \ C_1\,
t_i^{-1} \int_{U(t_i)} \int_{\II_i(r_ih_i)}
|\Delta_{h_i}^{r_i}(f,x)|^p \ d{x_i} \, d{h_i}, \ \ i=1,2,
\end{equation}
where $i'=2$ if $i=1$, and $i'=1$  if $i=2$, and the quasi-norm
$\| \Delta_{h_i}^{r_i}(f)\|_{p,x_i}$ is applied to the  function $f$ by considering $f$ as a univariate
function in  variable $x_i$ with the other variable held fixed. Hence, by using of the identity 
$\| \Delta_h^r(f)\|_p = \|\|\Delta_{h_2}^{r_2} (\Delta_{h_1}^{r_1}(f))\|_{p,x_1}\|_{p,x_2}$
and Fubini's theorem we prove the second inequality in \eqref{w_r><omega_r}.
\end{proof}

\begin{lemma} \label{Lemma[IneqW_r(f,t)]}
If $\Bbb I=\bigcup_{j=1}^n\Bbb I_j$, where $\Bbb I_j$ are cubes with disjoint interiors, $j=1,...,n$, then there is a constant $C$ depending only on $r,d,p$ such that
\begin{equation}\nonumber
\sum_{j=1}^n W_r(f,t)_{p,\Bbb I_j}^p \le CW_r(f,t)_{p,\Bbb I}^p.
\end{equation}
\end{lemma}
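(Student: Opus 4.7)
The lemma amounts to the quasi-additivity of the $p$-power of the mixed $p$-mean modulus under a partition of the domain into cubes. My plan is to first prove the corresponding inequality for a single $w_r$ (without the sum over $e$), and then assemble $W_r$ from the $w_{r(e)}$ using an elementary $p$-triangle inequality over the finite index set $e \subset [d]$.

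For the single-modulus statement, I would show that for each $r \in \ZZ^d_+$,
\[
\sum_{j=1}^n w_r(f,t)_{p,\II_j}^p \le w_r(f,t)_{p,\II}^p.
\]
Fix $h \in U(t)$. Two observations about the shifted domains are key: first, $\II_{j,rh} \subset \II_{rh}$, because any point $x$ together with all the shifts $x + \sum_{i \in S} j_i h_i \mathbf{e}_i$ ($S \subset [d]$, $0 \le j_i \le r_i$) needed by $\Delta_h^r$ that lie in $\II_j$ automatically lie in $\II \supset \II_j$; second, the sets $\II_{j,rh}$ inherit pairwise disjoint interiors from the $\II_j$. Hence
\[
\sum_{j=1}^n \int_{\II_{j,rh}} |\Delta_h^r(f,x)|^p\,dx \ \le \ \int_{\II_{rh}} |\Delta_h^r(f,x)|^p\,dx,
\]
and integrating over $h \in U(t)$ and multiplying by $\prod_i t_i^{-1}$ (with the usual $\sup$ convention for $p = \infty$) gives the claim.

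To assemble $W_r$, set $N := 2^d - 1$ and $\gamma_p := \max(1, N^{p-1})$, so that $\bigl(\sum_{k=1}^N a_k\bigr)^p \le \gamma_p \sum_{k=1}^N a_k^p$ for all $a_k \ge 0$ and all $p > 0$ (Jensen for $p \ge 1$, concavity of $s \mapsto s^p$ for $0 < p < 1$). Applying this with $a_e = w_{r(e)}(f,t)_{p,\II_j}$, summing on $j$, invoking the single-modulus bound term-by-term, and using that each $w_{r(e)}(f,t)_{p,\II} \le W_r(f,t)_{p,\II}$ gives
\[
\sum_{j=1}^n W_r(f,t)_{p,\II_j}^p \ \le \ \gamma_p \sum_{\varnothing \ne e \subset [d]} \sum_{j=1}^n w_{r(e)}(f,t)_{p,\II_j}^p \ \le \ \gamma_p \sum_{e} w_{r(e)}(f,t)_{p,\II}^p \ \le \ \gamma_p N\, W_r(f,t)_{p,\II}^p,
\]
which is the desired inequality with $C = \gamma_p N$.

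I do not expect a significant obstacle; the content of the lemma is additivity of an $L_p$ quasi-norm under a measurable partition, wrapped in the finite bookkeeping over $e \subset [d]$. The only point requiring attention is the correct identification of the shift domain $\II_{j,rh}$ for the mixed difference operator on each cube, which follows immediately from applying the definition of $Q_y$ to $\II_j$ in place of $Q$.
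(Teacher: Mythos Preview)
Your proposal is correct and follows essentially the same route as the paper's proof: first establish $\sum_{j} w_{r(e)}(f,t)_{p,\II_j}^p \le w_{r(e)}(f,t)_{p,\II}^p$ from the inclusion $\II_{j,rh}\subset \II_{rh}$ and the pairwise disjointness of the $\II_{j,rh}$, then pass to $W_r$ via the elementary $p$-power inequality over the finitely many nonempty $e\subset[d]$. The only difference is cosmetic---you make the constants $\gamma_p$ and $N$ explicit, whereas the paper leaves them as generic $C,C'$.
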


\begin{proof}
Indeed, we have
\[ 
\sum_{j=1}^n W_r(f,t)_{p,\Bbb I_j}^p \le C \sum_{j=1}^n \sum_{e \subset [d], \, e  \ne \varnothing}
w_{r(e)}(f,t)_{p,\Bbb I_j}^p=C\sum_{e \subset [d], \, e  \ne \varnothing} \ \sum_{j=1}^n w_{r(e)}(f,t)_{p,\Bbb I_j}^p,
\]
and
\begin{align*} 
\sum_{j=1}^n w_{r(e)}(f,t)_{p,\Bbb I_j}^p&= \prod_{i=1}^d  t_i^{-1}\int_{U(t)}\ \sum_{j=1}^n\int_{(\Bbb I_j)_{r(e)h}}
|\Delta^{r(e)}_h(f,x)|^pdx\,dh\\[2ex]
& \le \ 
\prod_{i=1}^d  t_i^{-1}\int_{U(t)}\int_{\Bbb I_{r(e)h}}|\Delta^{r(e)}_h(f,x)|^pdx\,dh\\[2ex]
& \le \ 
w_{r(e)}(f,t)_{p,\Bbb I}^p.
\end{align*}
Hence,
 \[
\sum_{j=1}^n W_r(f,t)_{p,\Bbb I_j}^p \le C\sum_{e \subset [d], \, e \ne \varnothing}w_{r(e)}(f,t)_{p,\Bbb I}^p \le 
C' W_r(f,t)_{p,\Bbb I}^p.
\]
\end{proof}

The following Marchaud's inequality for mixed modulus of smoothness gives upper bounds of a mixed modulus of smoothness of a function on $d$-parallelepiped $Q$ by its higher order's mixed modulus of smoothness and $L_p$-quasi-norm (for $d=1$ see \cite{Ma27} and also \cite[Theorems II.8.1 \& II.8.2]{DL}). It allows us to reduce the general case of Theorem \ref{Theorem: Whitney} to the simplest case where $r=(1,1,...,1)$ and therefore we deal with the 
$L_p$-approximation by constant functions.

\begin{lemma} \label{Lemma[MarchaudIneq]}
Let $0<p \le \infty$, $i \in [d]$, $\delta(Q):=(\delta_1,...,\delta_d)$ be the size of $Q$ and $k,r \in \Bbb Z_+^d$ such that $1 \le k_i<r_i$ and  $k_j=r_j$ $j\neq i$. Then there is a constant $C$ depending only on $r,d$ and $p$ such that for each $f \in L_p(Q),\ t=(t_1,..,t_d)>0$,
\begin{equation} \label{MarchaudIneq(p>1)}
\omega_k(f,t)_{p,Q} \le C\,t_i^{k_i} 
\left[ \int_{t_i}^{\delta_i}\dfrac{\omega_r(f,(t_1,...,t_{i-1},u,t_{i+1},...,t_d))_{p,Q}\,du}{u^{k_i+1}}
\ + \ \dfrac{\|f\|_{p,Q}}{\delta_i^{k_i}}\right], \ 1 \le p \le \infty, 
\end{equation}
and 
\begin{equation} \label{MarchaudIneq}
\omega_k(f,t)_{p,Q}^p \le C^p\,t_i^{pk_i} 
\left[ \int_{t_i}^{\delta_i}\dfrac{\omega_r(f,(t_1,...,t_{i-1},u,t_{i+1},...,t_d))_{p,Q}^p\,du}{u^{k_ip+1}}
\ + \ \dfrac{\|f\|_{p,Q}^p}{\delta_i^{pk_i}}\right], \ 0 < p < 1. 
\end{equation}
\end{lemma}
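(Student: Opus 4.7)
\medskip
\noindent\textbf{Proof plan.} The approach is to mimic the classical univariate Marchaud proof in the $i$-th coordinate direction, treating the mixed difference operators $\Delta_{h_j}^{r_j}$ for $j\ne i$ as passive factors (their $r_j$-orders appear in both $k$ and $r$ since $k_j=r_j$ for $j\ne i$). Starting from the univariate identity
$$2^{k_i}\,\Delta_{h_i}^{k_i} \ = \ \Delta_{2h_i}^{k_i} \ - \ \sum_{l=1}^{k_i}\binom{k_i}{l}\,2^{k_i-l}\,\Delta_{h_i}^{k_i+l}$$
applied in the $x_i$-variable, composing on both sides with $\prod_{j\ne i}\Delta_{h_j}^{r_j}$, taking the $p$-quasi-norm on $Q_{kh}$ (using $p$-convexity of $\|\cdot\|_p^p$ when $0<p<1$ and the ordinary triangle inequality when $1\le p\le\infty$), and finally the supremum over $|h_j|\le t_j$ for every $j\in[d]$, I would obtain the one-step recursion
$$2^{pk_i}\,\omega_k(f,t)_{p,Q}^p \ \le \ \omega_k\bigl(f,t^{(i,2)}\bigr)_{p,Q}^p \ + \ C\sum_{l=1}^{k_i}\omega_{k^{(i,l)}}(f,t)_{p,Q}^p,$$
where $t^{(i,2)}:=(t_1,\dots,2t_i,\dots,t_d)$ and the vector $k^{(i,l)}\in\ZZ_+^d$ agrees with $r$ in every coordinate except the $i$-th, where $k^{(i,l)}_i:=k_i+l$. (For $1\le p\le\infty$ the $p$-th powers disappear.)

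\medskip
\noindent\textbf{Key steps.} I would iterate the recursion in two directions. First, iterate in scale $t_i\mapsto 2t_i\mapsto\cdots\mapsto 2^n t_i$ with $2^n t_i\asymp\delta_i$: this produces a geometric sum $\sum_{j=0}^{n-1}2^{-jpk_i}\,\omega_{k^{(i,l)}}(f,t^{(i,2^j)})_{p,Q}^p$ together with a boundary contribution $2^{-npk_i}\,\omega_k(f,t^{(i,2^n)})_{p,Q}^p\le C\,(t_i/\delta_i)^{pk_i}\|f\|_{p,Q}^p$. Second, for each summand with $l<r_i-k_i$ (so that the $i$-th order $k_i+l$ is still below $r_i$), apply the same recursion to $\omega_{k^{(i,l)}}$ to bump that order up further; for $l\ge r_i-k_i$ we have $k^{(i,l)}\ge r$ componentwise, so $\omega_{k^{(i,l)}}(f,\cdot)\le C\,\omega_r(f,\cdot)$ by the elementary monotonicity estimate $\|\Delta_h^{k_1}g\|_p\le C\,\|\Delta_h^{k_2}g\|_p$ for $k_1\ge k_2$ (which follows from $\Delta_h^{k_1}=\Delta_h^{k_1-k_2}\Delta_h^{k_2}$ combined with $\|\Delta_h^m g\|_p\le C(m,p)\|g\|_p$). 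After at most $r_i-k_i$ rounds of branching, every leaf of the iteration is an $\omega_r$-term and every boundary contributes to $(t_i/\delta_i)^{pk_i}\|f\|_{p,Q}^p$. A standard dyadic-to-integral comparison converts the resulting double sum into $t_i^{pk_i}\int_{t_i}^{\delta_i}\omega_r(f,(t_1,\dots,u,\dots,t_d))_{p,Q}^p\,du/u^{pk_i+1}$, yielding the asserted inequality.

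\medskip
\noindent\textbf{Main obstacle.} The delicate point is the bookkeeping of the nested iteration: each application of the base recursion branches into $k_i$ child terms at higher orders $k_i+l+l'$ in the $i$-th slot, so one needs either a tidy induction on the gap $r_i-k_i$ or a single closed-form identity expressing $\Delta_{h_i}^{k_i}$ in terms of $\Delta_{2^jh_i}^{r_i}$ at dyadic scales plus a remainder, together with careful tracking of constants. Working at the level of $\omega$ throughout (rather than at the level of pointwise $\|\Delta_h^k f\|_p^p$ and then integrating) has the virtue of avoiding any sup-versus-integral swap: at every stage the quantities are already mixed moduli, so passing to the supremum commutes with the estimates. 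Once the combinatorial step is settled, the passage from the geometric sum to the integral is routine, and the two cases $1\le p\le\infty$ and $0<p<1$ differ only in whether the $L_p$-norm or its $p$-th power is used to establish the base recursion.
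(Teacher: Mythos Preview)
Your plan follows the same route as the paper: run the classical univariate Marchaud argument in the distinguished variable $x_i$ while carrying the mixed differences $\prod_{j\ne i}\Delta_{h_j}^{r_j}$ as passive factors, iterate a dyadic recursion in scale, and convert the resulting geometric sum into the integral. There is, however, a technical gap. The pointwise operator identity you write is valid only where every term on the right is defined; in the $i$-th coordinate this requires $x_i+2k_ih_i\in[a_i,b_i]$, a strictly smaller set than the $i$-th projection of $Q_{kh}$ (which only forces $x_i+k_ih_i\in[a_i,b_i]$). Taking the $L_p$-norm on $Q_{kh}$ as you propose is therefore not justified, and the passage to the modulus-level recursion does not follow as stated. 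The paper remedies this by first restricting $x_i$ to the left half $[a_i,(a_i+b_i)/2]$, where the identity and all its iterates up to level $m$ are legitimate provided $2^{m+2}k_i|h_i|\le\delta_i$; the right half is then covered by the reflection $x_i\mapsto a_i+b_i-x_i$, and summing the two contributions costs only an extra factor~$2$.

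On the combinatorial side the paper is also tidier than your outline. In place of your expansion, which spawns terms at every order $k_i+1,\dots,2k_i$, the paper uses the identity
\[
(x-1)^{k_i}\ =\ 2^{-k_i}(x^2-1)^{k_i}\ +\ P(x)\,(x-1)^{k_i+1},\qquad P(x)=\frac{1-2^{-k_i}(x+1)^{k_i}}{x-1},
\]
which produces a \emph{single} higher-order term, of order exactly $k_i+1$. This collapses the nested branching you flag as the main obstacle into a clean induction on $r_i$: establish the inequality for $r_i=k_i+1$ by the dyadic iteration, then go from $r_i$ to $r_i+1$ by inserting the induction hypothesis for $\omega_{(\dots,r_i,\dots)}$ inside the integral and interchanging the order of integration.
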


\begin{proof}
We will prove the inequality \eqref{MarchaudIneq}. The inequality \eqref{MarchaudIneq(p>1)} can be proven in a similar way with a slight modification. For simplicity let us prove for $d=2$ when $Q=[a_1,b_1] \times [a_2,b_2]$. The case $d>2$ can be proven analogously by induction on $d$. Set $Q_i:=[a_i,b_i]$, $i=1,2$.  

We first prove  \eqref{MarchaudIneq} for the special case $r=(k_1+1,k_2)$. Let the shift operator $T_h$ be defined by  $T_h(f,x):=f(x+h)$. From the identity 
\[
(x-1)^{k_1}=2^{-k_1}(x^2-1)^{k_1}+P(x)(x-1)^{k_1+1}
\]
 for the polynomial $P(x):=[1-2^{-k_1}(x+1)^{k_1}]/(x-1)$ of degree $k_1-1$, we have
\begin{align} \label{83}
(T_{h_1}-I)^{k_1}=2^{-k_1}(T_{2h_1}-I)^{k_1}+P(T_{h_1})(T_{h_1}-I)^{k_1+1}. 
\end{align}
Let $Q_1':=[a_1,c_1]$, $c_1:=(a_1+b_1)/2$. 
Notice that $\|P(T_{h_1})(g)\|_p \le M\|g\|_p$ for $g \in L_p(Q_1')$. Hence, by\eqref{83} we obtain
\begin{align}
 \|\Delta^{k_1}_{h_1} \Delta^{k_2}_{h_2}(f)\|_{p,Q_1'}^p 
& \nonumber \le 2^{-pk_1}\|\Delta^{k_1}_{2h_1} \Delta^{k_2}_{h_2}(f)\|_{p,Q_1'}^p+
M^p\|\Delta^{k_1+1}_{h_1}\Delta^{k_2}_{h_2}(f)\|_{p,Q_1'}^p\\[2ex]
& \le \ 
M^p\sum_{j=0}^{m}2^{-k_1jp}\|\Delta^{k_1+1}_{2^jh_1}\Delta^{k_2}_{h_2}(f)\|_{p,Q_1'}^p+
2^{-k_1(m+1)p}\|\Delta^{k_1}_{2^{m+1}h_1}\Delta^{k_2}_{h_2}(f)\|_{p,Q_1'}^p \\[2ex]
&\label{85} 
\le \ 
M^p\sum_{j=0}^{m}2^{-k_1jp}\|\Delta^{k_1+1}_{2^jh_1}\Delta^{k_2}_{h_2}(f)\|_{p,Q_1}^p+
2^{-k_1(m+1)p}\|\Delta^{k_1}_{2^{m+1}h_1}\Delta^{k_2}_{h_2}(f)\|_{p,Q_1}^p 
\end{align}
provided that $2^{m+2}k_1h_1 \le \delta_1$. Here we consider $f$ as a function of variable $x_1$ with $x_2$ held fixed.
We see that \eqref{85} holds also if $Q_1'$ is replaced by $Q_1'':=[c_1,b_1]$. This can be obtained by applying \eqref{85} to the function
$g(x):=f(b_1-x)$ which has the same moduli of smoothness as $f$. Therefore, it follows that \eqref{85} also holds with $Q_1$ in place of $Q_1'$ and with additional multiplier 2 in the right-hand side:
$$
\|\Delta^{k_1}_{h_1} \Delta^{k_2}_{h_2}(f)\|_{p,Q_1}^p \le 
2M^p\sum_{j=0}^{m}2^{-k_1jp}\|\Delta^{k_1+1}_{2^jh_1}\Delta^{k_2}_{h_2}(f)\|_{p,Q_1}^p
\ + \ 
2.2^{-k_1(m+1)p}\|\Delta^{k_1}_{2^{m+1}h_1}\Delta^{k_2}_{h_2}(f)\|_{p,Q_1}^p.
$$
For $h=(h_1,h_2) \in \RR^2, \ t=(t_1,t_2) \in \RR^2_+,$ with $|h_1| \le t_1,\ |h_2| \le t_2$, from the last inequality we derive
\begin{align*}
\int_{Q_2}\int_{Q_1}|\Delta^{k_1}_{h_1}\Delta^{k_2}_{h_2}(f)|^p \, dx_1\, dx_2 
& \le 
2M^p \sum_{j=0}^m2^{-k_1jp}\int_{Q_1}\int_{Q_2}
|\Delta^{k_2}_{h_2}\Delta^{k_1+1}_{2^jh_1}(f)|^p \, dx_2\, dx_1\\[2ex]
& + 2.2^{-k_1(m+1)p} \int_{Q_1}\int_{Q_2}
|\Delta^{k_2}_{h_2}\Delta^{k_1}_{2^{m+1}h_1}(f)|^p \, dx_2\, dx_1\\[2ex]
& \le C' \bigg[t_1^{k_1p} \sum_{j=0}^m (2^{pj}t_1^p)^{-k_1}\int_{Q_1}\int_{Q_2}
|\Delta^{k_2}_{h_2}\Delta^{k_1+1}_{2^jh_1}(f)|^p \, dx_2\, dx_1\\[2ex]
&+2^{-k_1mp}2^{k_2p}\|f\|_{p,Q}^p \bigg].
\end{align*}
Since
$$
(2^{pj}t_1^p)^{-k_1} \int_{Q_1}\int_{Q_2} |\Delta^{k_2}_{h_2}\Delta^{k_1+1}_{2^jh_1}(f)|^p \, dx_2dx_1
\ \le \ 
2^{k_1p + 1}\int_{2^jt_1}^{2^{j+1}t_1}
\omega_{k_1+1,k_2}(f,(u,t_2))_{p,Q}^pu^{-k_1p-1}du, 
$$
if we take $m$ to be the last integer for which $2^{m+2}k_1t_1 \le \delta_1$, then exists a constant $C$ such that
$$
\omega_k(f,t)_{p,Q}^p \le C^p t_1^{k_1p}\left[\int_{t_1}^{\delta_1}
\dfrac{\omega_{k_1+1,k_2}(f,(u,t_2))_{p,Q}^p}{u^{k_1p+1}}du +
\dfrac{\|f\|_{p,Q}^p}{\delta_1^{k_1p}}\right].
$$
Thus,  the inequality \eqref{MarchaudIneq} has been proven for the case $r=(k_1+1,k_2)$. 

We now prove it for arbitrary $k,r$ by induction on $r_1$. We assume that \eqref{MarchaudIneq} holds true for  $r=(r_1,k_2)$ and prove it for $r=(r_1+1,k_2)$. Hence, by the inequality \eqref{MarchaudIneq} for $r=(k_1+1,k_2)$
we get
\begin{equation} \nonumber
\begin{split}
\omega_k(f,t)_{p,Q}^p & \le  C_1t_1^{k_1p}\left[\int_{t_1}^{\delta_1}\dfrac{\omega_{r_1,k_2}(f,(u,t_2))_{p,Q}^p}{u^{k_1p+1}}du 
\ + \ 
\dfrac{\|f\|_{p,Q}^p}{\delta_1^{k_1p}} \right]\\[2ex]
& \le C_1t_1^{k_1p}\left[\int_{t_1}^{\delta_1}u^{-k_1p-1}C_2u^{r_1p}\left(\int_u^{\delta_1}
\dfrac{\omega_{r_1+1,k_2}(f,(h,t_2))_{p,Q}^p}{h^{r_1p+1}}\, dh 
\ + \ 
\dfrac{\|f\|_{p,Q}^p}{\delta_1^{r_1p}} \right)du 
\ + \ 
\dfrac{\|f\|_{p,Q}^p}{\delta_1^{k_1p}}\right]\\[2ex]
& \le \
C_1t_1^{k_1p}\dfrac{\|f\|_{p,Q}^p}{\delta_1^{k_1p}}+C_3 t_1^{k_1p}\dfrac{\|f\|_{p,Q}^p}{\delta_1^{r_1p}}\int_{t_1}^{\delta_1} u^{r_1p-k_1p-1}du\\[2ex]
& +C_3t_1^{k_1p}\int_{t_1}^{\delta_1}\dfrac{\omega_{r_1+1,k_2}(f,(h,t_2))_{p,Q}^pdh}{h^{r_1p+1}}dh \int_{t_1}^h u^{r_1p-k_1p-1}\, du\\[2ex]
& \le \ 
C_4t_1^{k_1p} \left[\int_{t_1}^{\delta_1}\dfrac{\omega_{r_1+1,k_2}(f,(h,t_2))_{p,Q}^pdh}{h^{k_1p+1}}dh +\dfrac{\|f\|_{p,Q}^p}{\delta_1^{k_1p}} \right].
\end{split}
\end{equation}
\end{proof}

By $f^{(k)}, \ k \in {\ZZ}^d_+$, we denote the $k$-th order 
generalized mixed  derivative of
a locally integrable function $f$ ,
i.e., 
$$
        \int_Q f^{(k)}(x)\varphi(x)\,dx = (-1)^{k_1+...+k_d} \int_Q f(x)
\frac{\partial^{k_1+...+k_d}\varphi}{\partial x_1^{k_1}\cdots \partial
x_d^{k_d}}(x)\,dx
$$
for all test functions $\varphi \in C_0^{\infty}(Q)$, where $C_0^{\infty}(Q)$
is the space of infinitely differentiable functions on $Q$ with compact support which is
interior to $Q$.
For $r \in {\ZZ}^d_+$ and $0 < p\leq \infty$, the  Sobolev 
space $W^r_p(Q)$ of mixed smoothness $r$ is defined as the set of locally integrable
functions $f \in L_p(Q)$, for which the generalized derivative $f^{(r(e))}$ exists
as a locally integrable function and the following quasi-norm is finite
\begin{equation*}
\|f\|_{W^r_p(Q)}
:= \ 
 \sum_{e \subset [d]} \|f^{(r(e))}\|_{p,Q}.
\end{equation*}

For $x,y \in {\RR}^d$, the inequality $x \le y \ (x < y)$ means that $x_j \le y_j \ (x_j < y_j), \ j \in [d]$. Let $f\in W^r_p(Q)$ and $x^0 \in Q$. Then $f$ has continuous derivatives of order $k$ for 
each $k < r$, therefore,  we can define the Taylor polynomial $P_k(f)$ of order $k$ by
 \begin{equation*}  
P_k(f,x)  
\ := \ 
P_k(f,x^0,x)  
\ = \ 
\sum_{0 \le s < k} f^{(s)}(x^0)e_s(x-x^0), 
\end{equation*}
where $e_s(x) := \prod_{i=1}^d e_{s_i}(x_i)$ and $e_m(t) := t^m/m!$. 

We will need an estimate of the error of the approximation of a function $f\in W^r_p(Q)$  by Taylor polynomials via the size of $Q$ and the $L_p$-quasi-norm of its derivatives. The following lemma on multivariate anisotropic Taylor polynomial approximation is a nontrivial generalization of the well-known univariate result. For a proof of this lemma we refer the reader to \cite{DU11}.

\begin{lemma} \label{Lemma:[E_{r-1}(f)<]}
Let $1 \le p \le \infty$, $r \in {\NN}^d$. Then there is a constant $C$ depending 
only on $r,d$ such that for every $f \in W^r_p(Q)$,
\begin{equation*}  
E_r(f)_{p,Q}  
\ \le \
\|f - P_r(f)\|_{p,Q}  
\ \le \ 
C \sum_{e \subset [d], \, e \ne \varnothing} \ \prod_{i \in e} \delta_i^{r_i} \|f^{(r(e))}\|_{p,Q}, 
\end{equation*}
where $\delta = \delta(Q)$ is the size of $Q$.
 \end{lemma}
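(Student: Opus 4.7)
The first inequality $E_r(f)_{p,Q} \le \|f - P_r(f)\|_{p,Q}$ is immediate: by the definition of $P_r(f)$ via the monomials $e_s$ with $s < r$, it has degree at most $r_i - 1$ in each $x_i$, so $P_r(f) \in \Pp_r$ and is an admissible competitor in the infimum defining $E_r(f)_{p,Q}$. All the content therefore lies in the upper bound on $\|f - P_r(f)\|_{p,Q}$.

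My plan for this second inequality is to exploit the tensor-product structure of $P_r$. Let $T_i$ denote the univariate Taylor operator of order $r_i$ in the variable $x_i$ about $x^0_i$, acting on $f$ viewed as a function of $x_i$ with the other variables frozen. Because the monomial $e_s$ factorizes as $\prod_i e_{s_i}(x_i - x^0_i)$, one checks that $P_r(f) = T_1 T_2 \cdots T_d f$, the operators $T_i$ commuting since they act on disjoint variables. From $T_i = I - (I - T_i)$ and a direct expansion of the product, inclusion--exclusion yields
\begin{equation*}
I - T_1 T_2 \cdots T_d \ = \ \sum_{e \subset [d],\, e \ne \varnothing} (-1)^{|e|+1} \prod_{i \in e}(I - T_i),
\end{equation*}
which reduces the problem to estimating each composite operator $\prod_{i \in e}(I - T_i)$ separately.

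For a fixed nonempty $e \subset [d]$, I would iterate the univariate integral form of Taylor's remainder in each coordinate $i \in e$, using that $(I - T_j)$ commutes with the differentiation $\partial_{x_i}^{r_i}$ whenever $j \ne i$. This gives the closed-form representation
\begin{equation*}
\left[\prod_{i \in e}(I - T_i) f\right](x) \ = \ \int \cdots \int \prod_{i \in e} \frac{(x_i - t_i)^{r_i - 1}}{(r_i - 1)!} \, f^{(r(e))}\bigl(\tilde{x}(t)\bigr) \, \prod_{i \in e} dt_i,
\end{equation*}
where $\tilde{x}(t)_i = t_i$ for $i \in e$ and $\tilde{x}(t)_j = x_j$ for $j \notin e$, and each $t_i$ ranges over an interval of length at most $\delta_i$. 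Bounding the kernel pointwise by $\prod_{i \in e} \delta_i^{r_i - 1}/(r_i - 1)!$, applying Minkowski's integral inequality in the $t$-variables (available since $p \ge 1$), and using Fubini in the coordinates $j \notin e$ yields
\begin{equation*}
\left\|\prod_{i \in e}(I - T_i) f\right\|_{p,Q} \ \le \ C \prod_{i \in e} \delta_i^{r_i} \, \|f^{(r(e))}\|_{p,Q}.
\end{equation*}
Summing over nonempty $e$ with the triangle inequality then delivers the claimed bound.

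The main obstacle is the multi-index bookkeeping inside the iterated remainder formula: one has to verify carefully that composing $(I - T_i)$ over $i \in e$ really collapses to a single integral whose only derivative of $f$ is $f^{(r(e))}$, with no surviving lower-order derivatives, and that the kernel factorizes cleanly into the claimed product. Once that representation is in hand, the $L_p$ estimate is routine; the restriction $p \ge 1$ enters only through Minkowski's and the triangle inequalities, which is precisely why this Taylor-polynomial approach cannot be pushed to $0 < p < 1$ and why the main theorem must instead route through the Marchaud-based reduction to $r = (1, \ldots, 1)$.
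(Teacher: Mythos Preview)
The paper does not actually prove this lemma; it simply states ``For a proof of this lemma we refer the reader to \cite{DU11}.'' Your sketch supplies a correct and self-contained argument: the identification $P_r(f)=T_1\cdots T_d f$, the inclusion--exclusion identity $I-\prod_i T_i=\sum_{\varnothing\ne e\subset[d]}(-1)^{|e|+1}\prod_{i\in e}(I-T_i)$, and the iterated integral remainder
\[
\Big[\prod_{i\in e}(I-T_i)f\Big](x)=\int\cdots\int \prod_{i\in e}\frac{(x_i-t_i)^{r_i-1}}{(r_i-1)!}\,f^{(r(e))}(\tilde x(t))\,\prod_{i\in e}dt_i
\]
are all valid, and the $L_p$ bound via the pointwise kernel estimate combined with Minkowski (or, equivalently, Jensen in the $t$-variables) and Fubini in the frozen variables is routine for $p\ge 1$. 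This tensor-product Taylor argument is in fact the standard one and is essentially what is carried out in \cite{DU11}; your remark that the reliance on Minkowski/triangle inequalities is exactly what blocks the extension to $0<p<1$ is also on point. The only item worth tightening in a full write-up is the justification of the iterated remainder formula for general $f\in W^r_p(Q)$ (differentiation under the integral and absolute continuity of the intermediate derivatives), which is most cleanly handled by first establishing the identity for smooth $f$ and then passing to the limit by density.
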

  
We denote by $L_p^{\operatorname{loc}}(\RR^d)$ the set of all functions $f$ on $\RR^d$ such that for every 
$x \in \RR^d$, there exists a neighborhood $V_x$ of $x$ such that $f \in L_p(V_x)$.  A basic property of the univariate $r$th difference operator $\Delta_h^r$ is that it turns a function $f \in L_p^{\operatorname{loc}}(\RR)$ to an almost everywhere zero function, i.e., $\Delta_h^r(f, x)= 0$  for almost every $x$, if and only if $f$ is almost everywhere equal to a polynomial from $\Pp_r$ (see, e.g., \cite[Proposition II.7.1]{DL}). The following lemma generalizes this property to the multivariate mixed difference operator  $\Delta_h^r$.     

\begin{lemma} \label{Lemma:a.e.polynomial}
Let $0 < p \le \infty$, $r \in {\NN}^d$ and $f \in L_p^{\operatorname{loc}}(\RR^d)$. 
Then $f$ is almost everywhere equal to a polynomial in $\Pp_r$ if and only if 
\begin{equation} \label{eq:[Delta_h^{r(e)}(f,x)]}
\Delta_h^{r(e)}(f,x) = 0 
\end{equation}
for all non-empty $e \subset [d]$ and for 
almost all $x,h \in \RR^d$. 
 \end{lemma}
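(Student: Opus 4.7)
The ``only if'' direction is immediate: if $f$ coincides a.e.\ with a polynomial $P \in \Pp_r$ and $e \subset [d]$ is non-empty, then for any $i \in e$ the factor $\Delta_{h_i}^{r_i}$ appearing in $\Delta_h^{r(e)} = \prod_{j=1}^d \Delta_{h_j}^{r(e)_j}$ already annihilates $P$, since $P$ has degree strictly less than $r_i$ in $x_i$. I would concentrate on the converse and prove it by induction on $d$. The base case $d = 1$ is the cited univariate result (Proposition II.7.1 of \cite{DL}).

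For the inductive step in dimension $d \ge 2$, specialize the hypothesis to the singleton $e = \{1\}$: $\Delta_{h_1}^{r_1}(f,x) = 0$ for a.e.\ $(x, h_1) \in \RR^d \times \RR$. By Fubini, for a.e.\ $y = (x_2, \ldots, x_d) \in \RR^{d-1}$ the section $f(\cdot, y)$ lies in $L_p^{\operatorname{loc}}(\RR)$ and satisfies $\Delta_{h_1}^{r_1} f(\cdot, y) = 0$ a.e.; the base case then supplies a unique polynomial $P_y$ of degree less than $r_1$ with $f(\cdot, y) = P_y$ a.e. Choosing a basis $\{q_k\}_{k < r_1}$ of $\Pp_{r_1}$ dual to $\{t^k\}_{k < r_1}$ in $L_2([0,1])$, I set
\begin{equation*}
a_k(y) := \int_0^1 f(t, y)\, q_k(t)\, dt, \qquad k = 0, \ldots, r_1 - 1,
\end{equation*}
which for a.e.\ $y$ equals the $k$-th coefficient of $P_y$. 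Since $\Pp_{r_1}$ is finite-dimensional, all $L_p$-quasi-norms on $[0,1]$ are equivalent there, so $|a_k(y)| \le C\,\|f(\cdot, y)\|_{p,[0,1]}$; integrating in $y$ over any bounded $K \subset \RR^{d-1}$ shows $a_k \in L_p^{\operatorname{loc}}(\RR^{d-1})$ uniformly for $0 < p \le \infty$, and
\begin{equation*}
f(x_1, y) = \sum_{k=0}^{r_1 - 1} a_k(y)\, x_1^k \quad \text{for a.e.\ } (x_1, y).
\end{equation*}

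To close the induction I verify that each $a_k$ satisfies the hypothesis of the lemma in dimension $d-1$ with degree vector $(r_2, \ldots, r_d)$. Given any non-empty $e' \subset \{2, \ldots, d\}$, the hypothesis applied to $e = e'$ gives $\Delta_h^{r(e')}(f) = 0$ a.e., and since $r(e')_1 = 0$ the operator $\Delta_h^{r(e')}$ acts only on the $y$-variables. Substituting the representation produces
\begin{equation*}
\sum_{k=0}^{r_1 - 1} \bigl[\Delta_{h'}^{r(e')} a_k\bigr](y)\, x_1^k = 0 \quad \text{for a.e.\ } (x_1, y, h'),
\end{equation*}
and linear independence of $\{x_1^k\}_{k < r_1}$ over measurable functions of $y$ (any a.e.-vanishing univariate polynomial is the zero polynomial) forces $\Delta_{h'}^{r(e')} a_k = 0$ a.e.\ for each $k$. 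The inductive hypothesis then yields $a_k \in \Pp_{(r_2, \ldots, r_d)}$ a.e., and reassembling gives $f \in \Pp_r$ a.e. The main obstacle is technical rather than conceptual: careful bookkeeping of the null sets across the nested Fubini arguments and verifying that the coefficient extraction lands in $L_p^{\operatorname{loc}}$ for the full range $0 < p \le \infty$; the structural content is just the univariate case applied in each coordinate combined with the finite-dimensional linear-independence fact.
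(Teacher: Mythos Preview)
Your proof is correct and follows a genuinely different route from the paper's. The paper first establishes an algebraic identity expressing the constant $1$ as a combination of monomials $x^k$ with $0 < k \le r$ and of the products $P_e(x) = \prod_{i \in e}(x_i - 1)^{r_i}$; translating this to operators via the correspondence between $x^k$ and the shift $T_h^k$, and between $P_e$ and $\Delta_h^{r(e)}$, yields
\[
f(x) \ = \ \sum_{0 < k \le r} a_k\, f(x + kh) \ + \ \sum_{e \ne \varnothing} b_e\, \Delta_h^{r(e)}(f,x),
\]
so under the hypothesis $f(x) = \sum_{0<k\le r} a_k f(x+kh)$ for a.e.\ $x,h$. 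For $1 \le p \le \infty$ the paper then mollifies against a $C_0^\infty$ kernel to place $f$ in $C^\infty$, observes that all mixed derivatives $f^{(r(e))}$ vanish, and invokes the Taylor estimate of Lemma~\ref{Lemma:[E_{r-1}(f)<]} to conclude $f = P_r(f) \in \Pp_r$; the case $0 < p < 1$ is reduced to $p = \infty$ by a Jensen-type bound showing $f$ is locally bounded. Your induction on $d$ is more elementary: it uses only the univariate case and finite-dimensional linear algebra in the $x_1$-fiber, and sidesteps the operator identity, the mollification step, and the appeal to Lemma~\ref{Lemma:[E_{r-1}(f)<]}. The paper's argument, by contrast, produces a clean global structure formula and avoids the slice-by-slice null-set bookkeeping that your approach requires.
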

 
\begin{proof}
The statement "only if" can be verified directly. Let us prove the other one. 
We need the following auxiliary fact. There are coefficients 
$a_k, \ 0 < k \le r$ and $b_e, \ e \subset [d], e \ne \varnothing$, such that
\begin{equation} \label{eq:[1]}
1 \ = \
\sum_{0 < k \le r} a_k x^k + \sum_{e \subset [d], \, e \ne \varnothing} b_e P_e(x)
\end{equation}
where $x^k:= \prod_{j=1}^d x_j^{k_j}$ and
\[P_e(x):= \ \prod_{i \in e} (x_i - 1)^{r_i}.
\]
Indeed, putting 
\[
A_e(x):= \ \prod_{i \in e} [(x_i - 1)^{r_i} - (-1)^{r_i}],
\]
we have
\begin{equation*} 
\begin{aligned}
P_{[d]}(x) 
\ = \
\prod_{i=1}^d (x_i - 1)^{r_i}
\ & = \
\prod_{i=1}^d [(x_i - 1)^{r_i} - (-1)^{r_i} + (-1)^{r_i}] \\
\ & = \
\sum_{e \subset [d]} \ \prod_{i \in [d] \setminus e}(-1)^{r_i} \ \prod_{i \in e}[(x_i - 1)^{r_i} - (-1)^{r_i}] \\
\ & = \
A_{[d]}(x) + \prod_{i=1}^d (-1)^{r_i} +
\sum_{e \in [d], \, e \ne [d], \varnothing} \ \prod_{i \in [d] \setminus e}(-1)^{r_i} \ A_e(x), \\
\end{aligned}
\end{equation*}
and
\begin{equation*} 
A_e(x) 
\  = \
\sum_{u \subset e, \, u \ne \varnothing} \ \prod_{i \in e \setminus u}(-1)^{r_i} \ P_u(x) 
+ (-1)^{|e|} \prod_{i \in e} (-1)^{r_i}.
\end{equation*}
Hence, it is easy to verify that
\begin{equation*} 
P_{[d]}(x) 
\  = \
A_{[d]}(x)
 + \sum_{e \subset [d], \, e \ne [d], \varnothing} \ \prod_{i \in [d] \setminus e}(-1)^{r_i}  
\sum_{u \subset e, \, u \ne \varnothing} \ \prod_{i \in e \setminus u}(-1)^{r_i} \ P_u(x) 
- (-1)^d \prod_{i=1}^d (-1)^{r_i}, 
\end{equation*}
or equivalently,
\begin{equation*} 
1
\  = \
(-1)^d \prod_{i=1}^d (-1)^{r_i}\left\{
A_{[d]}(x)
+ \sum_{e \subset [d], \, e \ne [d], \varnothing} \   
\sum_{u \subset e, \, u \ne \varnothing} \ 
\prod_{i \in [d] \setminus u}(-1)^{r_i} \ P_u(x) - P_{[d]}(x)\right\}. 
\end{equation*}
Notice that the polynomial in the right-hand side of the last equality is 
of the form \eqref{eq:[1]} what is desired. 

For a non-negative integer $s$ and a function $f$ on $\RR$, 
let the operator $T_h^s, \ h \in \RR$, be defined by $T_h^s(f,x):= f(x + sh)$. 
For a $k \in \ZZ^d_+$ and a function $f$ defined on $\RR^d$, 
let the mixed operator $T_h^k, \ h \in \RR^d$, be defined by 
\begin{equation*}
T_h^k(f):= \prod_{i=1}^d T_{h_i}^{k_i}(f), 
\end{equation*}
where the univariate operator $T_{h_i}^{k_i}$ is applied to $f$ as a univariate function in variable $x_i$ with the other variables held fixed. 
By using the correspondence  between the operator $T_h^k$ and the monomial $x^k$, and
between the operator $\Delta_h^{r(e)}$ and the polynomial $P_e(x)$, from \eqref{eq:[1]} we 
get the following equality for a function $f$ defined on $\RR^d$,
\begin{equation*} 
f(x) \ = \
\sum_{0 < k \le r} a_k f(x + kh) + \sum_{e \in [d], \, e \ne \varnothing} b_e \Delta_h^{r(e)}(f,x).
\end{equation*}
Let $f \in L_p^{\operatorname{loc}}(\RR^d)$ satisfying 
the condition \eqref{eq:[Delta_h^{r(e)}(f,x)]}. Then we have for almost all $x,h \in \RR^d$,
\begin{equation} \label{eq:[f]}
f(x) \ = \
\sum_{0 < k \le r} a_k f(x + kh).
\end{equation}
We first let $1 \le p \le \infty$. 
Take a function $g \in C^\infty_0(\RR^d)$ with $\int_{\RR^d}g(h) \, dh = 1$.
Then by \eqref{eq:[f]} we have for almost all $x$,
\begin{equation*} 
\begin{aligned}
f(x) \ = \
f(x)\int_{\RR^d}g(h) \, dh
\ & = \
\sum_{0 < k \le r} a_k \int_{\RR^d}g(h)f(x + kh) \, dh \\
\ & = \
\sum_{0 < k \le r} a_k \prod_{i=1}^d k_i^{-1}\int_{\RR^d}g((y-x)/k)f(y) \, dy,
\end{aligned}
\end{equation*}
where $(y-x)/k = ((y_1 - x_1)/k_1,...,(y_d - x_d)/k_d)$. Notice that each term in the right-hand side 
is a function in  $C^\infty(\RR^d)$. Hence, after redefinition on a set of measure zero $f \in C^\infty(\RR^d)$.
From \eqref{eq:[f]} and the equality 
\begin{equation*}
f^{(k)}(x)
\ = \
\lim_{h \to 0} \ \prod_{i=1}^d h_i^{-k_i} \Delta_h^k(f,x)
\end{equation*}
for any $k \in \ZZ^d_+$ we conclude that $f^{(r(e))}(x)= 0$ for all non-empty $e \subset [d]$. 
Applying Lemma \ref{Lemma:[E_{r-1}(f)<]} gives for all $N > 0$,
\begin{equation*}  
\|f - P_r(f)\|_{p,Q_N}  
\ \le \ 
C \sum_{e \subset [d], e \ne \varnothing} \ \prod_{i \in e} N^{r_i} \|f^{(r(e))}\|_{p,Q_N} 
\ = \ 0, 
\end{equation*}
where $Q_N$ the $d$-cube of the size $N$. This implies that 
$f(x) = P_r(f,x) \in\Pp_r$ almost everywhere on $\RR^d$. 
We now consider the case $0 < p < 1$. By \eqref{eq:[f]} and Jensen's inequality 
we have for almost all $x,h \in \RR^d$,
\begin{equation*} 
|f(x)|^p \ \le \
\sum_{0 < k \le r} |a_k|^p |f(x + kh)|^p.
\end{equation*}
Hence,
\begin{equation*} 
|f(x)|^p \ \le \
\sum_{0 < k \le r} |a_k|^p \int_{|h| \le 1}|f(x + kh)|^p \, dh.
\end{equation*}
Since the right-hand side is a locally bounded function on $\RR^d$, so is $f$.
From the proven case $p = \infty$ it follows that 
$f(x) = P_r(f,x) \in\Pp_r$ almost everywhere on $\RR^d$.
\end{proof}

\section{Proof of Theorem  \ref{Theorem: Whitney}}
\label{Whitney's inequality}

In this section we prove Theorem \ref{Theorem: Whitney}. To the end we need two auxiliary lemmas more. The first lemma gives an upper bound of the error of the approximation of a function $f \in L_p(Q)$ by constants functions. The second one establishes sufficient conditions of pre-compactness of a subset in $L_p(Q)$ via the total mixed modulus of smoothness of $\Omega_r(f,t)_{p,Q}$.

\begin{lemma} \label{ApproxbyConst}
Let $0<p \le \infty$. Then there is a constants $C$ depending only on $r,d,p$ such that for every $f \in L_p(Q)$, exists a constant $\beta$ such that
$$
 \|f-\beta\|_{p,Q} \le C \Omega_{\bf 1}(f,\delta)_{p,Q}, 
$$
where $\delta = \delta(Q)$ is the size of $Q$ and $\bf 1$ $=(1,..,1) \in \Bbb N^d.$
\end{lemma}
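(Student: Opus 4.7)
My approach is to use a one-step telescoping decomposition that works uniformly for all $0 < p \le \infty$, and take as the approximating constant a value $\beta := f(y^*)$ at a carefully chosen point $y^* \in Q$. This avoids any induction on $d$ and sidesteps questions of measurable selection.

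For arbitrary $x,y \in Q$, set $z_k := (y_1,\dots,y_{k-1},x_k,\dots,x_d)$ for $k = 1,\dots,d+1$, so that $z_1 = x$ and $z_{d+1} = y$. The telescoping identity
\begin{equation*}
f(x) - f(y) \ = \ \sum_{k=1}^d \bigl[f(z_k) - f(z_{k+1})\bigr]
\end{equation*}
writes the full difference as a sum of $d$ univariate first-order differences: the $k$-th summand is a first difference of $f$ in the single variable $x_k$, with the remaining coordinates fixed at prescribed values from $x$ or $y$. Raising to the $p$-th power and applying the $p$-quasi-triangle inequality (with constant $1$ for $p \le 1$ and $d^{p-1}$ for $p \ge 1$) gives $|f(x) - f(y)|^p \le C_{p,d}\sum_{k=1}^d |f(z_k) - f(z_{k+1})|^p$ when $p < \infty$, with the obvious modification for $p = \infty$.

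Next, I would integrate this pointwise bound over $(x,y) \in Q \times Q$ and divide by $|Q|$. By the mean-value property for the measurable function $y \mapsto \int_Q |f(x) - f(y)|^p\,dx$, the set
\begin{equation*}
E \ := \ \Bigl\{y \in Q : \ \textstyle\int_Q |f(x) - f(y)|^p\,dx \ \le \ |Q|^{-1}\int_Q\int_Q |f(x) - f(y)|^p\,dx\,dy\Bigr\}
\end{equation*}
has positive measure. Since $f$ is defined a.e.\ on $Q$, one may pick $y^* \in E$ at which $f(y^*)$ is defined, and set $\beta := f(y^*) \in \RR$. Then
\begin{equation*}
\|f - \beta\|_{p,Q}^p \ \le \ C_{p,d}\,|Q|^{-1}\sum_{k=1}^d \int_Q\!\int_Q |f(z_k) - f(z_{k+1})|^p\,dx\,dy.
\end{equation*}

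For each fixed $k$, the integrand depends only on the variables $y_1,\dots,y_k$ and $x_k,\dots,x_d$, so integrating out the remaining $d-1$ variables contributes a factor $|Q|/\delta_k$. Changing variable $h_k := x_k - y_k$ in the $k$-th direction identifies the resulting integral (up to the factor $\delta_k/2^{d-1}$) with the defining integral of the $p$-mean modulus $w_{\mathbf{1}(\{k\})}(f,\delta)_{p,Q}^p$. Summing over $k$ and invoking Lemma~\ref{lemma:[W_r><Omega_r]} to pass from $W_{\mathbf{1}}$ to $\Omega_{\mathbf{1}}$ yields $\|f-\beta\|_{p,Q}^p \le C\,\Omega_{\mathbf{1}}(f,\delta)_{p,Q}^p$. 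The only delicate point is the use of the pointwise value $f(y^*)$ of an $L_p$-function; this is handled by restricting $y^*$ to the full-measure set where $f$ is essentially defined, and nothing elsewhere in the argument is subtle.
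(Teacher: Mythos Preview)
Your argument is correct and follows essentially the same route as the paper's own proof: telescope $f(x)-f(y)$ along coordinate directions, integrate over $Q\times Q$, and pick $\beta=f(y^*)$ at a point where the average is attained. Two minor differences worth noting: (i) the paper works out $d=2$ and bounds each summand directly by the sup-modulus $\omega_{\mathbf 1(\{k\})}(f,\delta)_{p,Q}$, whereas you carry general $d$ and pass through the $p$-mean modulus $w_{\mathbf 1(\{k\})}$ before invoking Lemma~\ref{lemma:[W_r><Omega_r]}; both routes land in the same place. (ii) The paper asserts that $g(y)=\int_Q|f(x)-f(y)|^p\,dx$ is continuous on $Q$, which is not true for a generic $f\in L_p(Q)$; your positive-measure selection argument is the cleaner way to extract $y^*$, and it repairs this small slip without changing the structure of the proof.
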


\begin{proof}
Let us prove for the case $0<p \le 1$ and $d=2$. The general case can be proven in a similar way with a slight modification. 
Recall that  $Q=[a_1,b_1] \times [a_2,b_2]$ and 
$\delta(Q)= (\delta_1, \delta_2)= (b_1 - a_1, b_2 - a_2)$ for $d=2$. We have
\begin{align*} 
&\int_{Q}\int_{Q}|f(x)-f(y)|^p\,dx\,dy \\[2ex]
&\le \int_{Q}\int_{Q}|f(x_1,x_2)-f(y_1,x_2)|^p\,dx\,dy
\ + \ \int_{Q}\int_{Q}|f(y_1,x_2)-f(y_1,y_2)|^p\,dx\,dy \\[2ex]
&\le \delta_2\int_{a_2}^{b_2}\int_{a_1}^{b_1}\int_{a_1}^{b_1}|f(x_1,x_2)-f(y_1,x_2)|^p\,dx_1\,dy_1\,dx_2\\[2ex]
&+\delta_1 \int_{a_1}^{b_1}\int_{a_2}^{b_2}\int_{a_2}^{b_2}|f(y_1,x_2)-f(y_1,y_2)|^p\,dx_2\,dy_2\,dy_1
=: I_1 \ + \ I_2.
\end{align*}
On the other hand, 
\begin{align*} 
&\int_{a_1}^{b_1}\int_{a_1}^{b_1}|f(x_1,x_2)-f(y_1,x_2)|^p\,dx_1\,dy_1 \\[2ex]
&= \int_{a_1}^{b_1}\int_{a_1}^{x_1}|f(x_1,x_2)-f(y_1,x_2)|^p\,dy_1\,dx_1
\ +\ \int_{a_1}^{b_1}\int_{x_1}^{b_1}|f(x_1,x_2)-f(y_1,x_2)|^p\,dy_1\,dx_1\\[2ex]
&= \int_0^{\delta_1}\int_{a_1+u}^{b_1}|f(x_1,x_2)-f(x_1-u,x_2)|^p\,dx_1\, du
\ +\ \int_0^{\delta_1}\int_{a_1}^{b_1-u}|f(x_1,x_2)-f(x_1+u,x_2)|^p\,dx_1\, du \\[2ex]
&= 2 \int_0^{\delta_1}\int_{a_1}^{b_1-u}|f(x_1+u,x_2)-f(x_1,x_2)|^p\,dx_1\, du.
\end{align*}
Hence,
\begin{equation} \nonumber
I_1 
\ \le \ 
2 \delta_2\int_0^{\delta_1}\int_{a_2}^{b_2}\int_{a_1}^{b_1-u}|f(x_1+u,x_2)-f(x_1,x_2)|^p\,dx_1\, dx_2\, du
\ \le \ 
2 \delta_1\delta_2\omega_{(1,0)}(f,\delta(Q))_{p,Q}^p.
\end{equation}
In a similar way we prove that
\begin{equation} \nonumber
I_2 
\ \le \ 
2 \delta_1\delta_2\omega_{(0,1)}(f,\delta(Q))_{p,Q}^p.
\end{equation}
Thus, we have proven the following inequality
\begin{equation} \label{ineq[int_{Q}int_{Q}|f(x)-f(y)|]}
\int_{Q}\int_{Q}|f(x)-f(y)|^p\,dx\, dy  
\ \le \ 
2 \delta_1\delta_2
\left[ \omega_{(1,0)}(f,\delta(Q))_{p,Q}^p \ + \ \omega_{(0,1)}(f,\delta(Q))_{p,Q}^p \right].
\end{equation}
The function
\begin{equation} \nonumber
g(y)
:= \ \int_{Q}|f(x)-f(y)|^p\,dx
\end{equation} 
is continuous on $Q$. Consequently, by \eqref{ineq[int_{Q}int_{Q}|f(x)-f(y)|]} there is a $y^* \in Q$ such that for 
$\beta = f(y^*)$, 
\begin{equation} \nonumber
\int_{Q}|f(x)- \beta|^p\,dx  
\ \le \ 
2 \left[ \omega_{(1,0)}(f,\delta(Q))_{p,Q}^p \ + \ \omega_{(0,1)}(f,\delta(Q))_{p,Q}^p \right]
\ \le \ 
C \Omega_{\bf 1}(f,\delta(Q))_{p,Q}^p.
\end{equation}
\end{proof}

\begin{lemma} \label{Lemma[Precompact]}
Let $0 < p \le \infty$, $r \in {\NN}^d$ and $F$ is a set of functions in 
$L_p(Q)$. Then $F$ is pre-compact in $L_p(Q)$ if $F$ is bounded, i.e., 
$\|f\|_{p,Q} \le M$ for a constant $M$, and   
\begin{equation}  \label{Condition[Precompact]}
\lim_{t \to 0} \Omega_r(f,t)_{p,Q}
\ = \ 0, \ \text{for} \ t >0, \ t \in \RR^d_+, \ \text{uniformly for} \ f \in F.
\end{equation}
 \end{lemma}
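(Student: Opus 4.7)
The plan is to show that $F$ is totally bounded by constructing, for every $\epsilon > 0$, a uniformly close approximation of $F$ by a bounded subset of a finite-dimensional subspace of $L_p(Q)$, namely the piecewise constants on a fine grid partition of $Q$. Lemma \ref{ApproxbyConst} controls the approximation error on each piece in terms of the first-order total mixed modulus $\Omega_{\bf 1}$, so the main preparatory step is to deduce from the hypothesis that $\Omega_{\bf 1}(f,t)_{p,Q} \to 0$ uniformly in $f \in F$ as $t \to 0$, starting from the analogous condition on $\Omega_r$.

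\textbf{Reduction to the first-order modulus.} To pass from $\Omega_r$ to $\Omega_{\bf 1}$ I would apply Marchaud's inequality (Lemma \ref{Lemma[MarchaudIneq]}) coordinatewise: for each $i \in [d]$ with $r_i \ge 2$, set $k = e_i$ and let the role of $r$ in \eqref{MarchaudIneq}--\eqref{MarchaudIneq(p>1)} be played by the vector $r_i e_i = r(\{i\})$. This bounds $\omega_{e_i}(f,t)_{p,Q}$ by an integral over $[t_i,\delta_i]$ of $\omega_{r(\{i\})}(f,u e_i)_{p,Q} \le \Omega_r(f,u e_i)_{p,Q}$ together with a residual $\|f\|_{p,Q}$-term. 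Splitting this integral at a small threshold $\delta^*$ where the uniform hypothesis guarantees $\Omega_r(f,u e_i)_{p,Q} < \eta$ for all $u \le \delta^*$ and $f \in F$, and using $\|f\|_{p,Q} \le M$ on the tail, one obtains that $\omega_{e_i}(f,t)_{p,Q} \to 0$ as $t_i \to 0$ uniformly in $f \in F$. The remaining mixed first-order moduli $\omega_{r(e)}(f,t)_{p,Q}$ with $r = {\bf 1}$ and $|e| \ge 2$ are handled by peeling off all but one of the difference operators in $\Delta_h^{r(e)}$ and using the elementary bound $\|\Delta_{h_i}^1 g\|_{p,Q} \le 2\|g\|_{p,Q}$ (together with its $p$-power analogue when $0 < p < 1$), giving $\omega_{r(e)}(f,t)_{p,Q} \le C\,\omega_{e_i}(f,t_i e_i)_{p,Q}$ for any fixed $i \in e$. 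Summing over $e$ yields the desired uniform vanishing of $\Omega_{\bf 1}(f,t)_{p,Q}$ on $F$.

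\textbf{Piecewise-constant approximation and conclusion.} Given $\epsilon > 0$, choose $t = (t_1,\ldots,t_d) \in \RR^d_+$ with each $t_i$ so small that $\Omega_{\bf 1}(f,t)_{p,Q} < \epsilon$ uniformly for $f \in F$, and partition $Q$ into $d$-parallelepipeds $Q_1,\ldots,Q_N$ with disjoint interiors, each of size $t$. By Lemma \ref{ApproxbyConst}, for each $j$ there is a constant $\beta_j(f)$ with $\|f-\beta_j(f)\|_{p,Q_j} \le C\,\Omega_{\bf 1}(f,t)_{p,Q_j}$. Setting $Tf := \sum_{j=1}^N \beta_j(f)\chi_{Q_j}$ and summing the local errors with the help of Lemma \ref{Lemma[IneqW_r(f,t)]} and the equivalence $W_r \asymp \Omega_r$ from Lemma \ref{lemma:[W_r><Omega_r]} (with the standard modification for $p=\infty$), one gets
$$
\|f - Tf\|_{p,Q} \le C'\,\Omega_{\bf 1}(f,t)_{p,Q} < C'\epsilon.
$$
The constants $\beta_j(f)$ are uniformly bounded in $j$ and $f$ via $|\beta_j(f)|\,|Q_j|^{1/p} \le \|f\|_{p,Q_j} + \|f - \beta_j(f)\|_{p,Q_j} \le M + C\epsilon$, so $T(F)$ is a bounded subset of the $N$-dimensional subspace $\operatorname{span}\{\chi_{Q_j}\}_{j=1}^N$ of $L_p(Q)$ and is therefore pre-compact. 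This produces a finite $C'\epsilon$-net for $F$, and since $\epsilon>0$ was arbitrary, $F$ is pre-compact in $L_p(Q)$.

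\textbf{Main obstacle.} The delicate point is the uniform control of the Marchaud reduction in the regime $0<p<1$, where inequality \eqref{MarchaudIneq} appears in $p$-power form: the splitting of the integral and the treatment of the residual $\|f\|_{p,Q}^p/\delta_i^{pk_i}$ term must be executed at the level of $p$-quasi-norms, and the transfer from single-direction to mixed first-order moduli must be done via the $p$-inequality $(a+b)^p \le a^p + b^p$ rather than the ordinary triangle inequality. Once this bookkeeping is in place, every remaining step is uniform in $p \in (0,\infty]$, since Lemmas \ref{ApproxbyConst}, \ref{Lemma[IneqW_r(f,t)]}, and \ref{lemma:[W_r><Omega_r]} already cover the full range.
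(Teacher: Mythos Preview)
Your proposal is correct and follows essentially the same route as the paper: Marchaud's inequality (Lemma~\ref{Lemma[MarchaudIneq]}) in each coordinate to pass from $\Omega_r$ to the directional first-order moduli, the elementary peeling bound to reach $\Omega_{\bf 1}$, then Lemma~\ref{ApproxbyConst} on grid cells together with Lemmas~\ref{lemma:[W_r><Omega_r]} and~\ref{Lemma[IneqW_r(f,t)]} to build a uniformly close piecewise-constant approximant lying in a bounded finite-dimensional set. Your write-up is in fact more explicit than the paper about the integral splitting in the Marchaud step and about the boundedness of $T(F)$, but the structure and the key lemmas used are identical.
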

 
 \begin{proof}
For simplicity we prove the lemma for the case $0 < p \le 1$ and $Q=[0,1]^2$. The general case can be proven in a similar way. Put $r^1=(r_1,0)$, $r^2=(0,r_2)$. By Lemma \ref{Lemma[MarchaudIneq]}
\begin{equation} \nonumber
\omega_{(1,0)}(f,t)_{p,Q}^p \le C_1 t_1^p 
\left[ \int_{t_1}^{\delta_1}\dfrac{\omega_{r^1}(f,(u,t_2))_{p,Q}^p\,du}{u^{p+1}}
\ + \ M^p\right], 
\end{equation}
\begin{equation} \nonumber
\omega_{(0,1)}(f,t)_{p,Q}^p \le C_1 t_2^p 
\left[ \int_{t_2}^{\delta_2}\dfrac{\omega_{r^2}(f,(t_1,u))_{p,Q}^p\,du}{u^{p+1}}
\ + \ M^p\right]. 
\end{equation}
Hence, by \eqref{Condition[Precompact]} and the inequality 
$\omega_{(1,1)}(f,t)_{p,Q}^p \le 2\,\omega_{(1,0)}(f,t)_{p,Q}^p$ we have that 
\begin{equation}  \label{[UniformConvergence]}
\lim_{t \to 0} \Omega_{{\bf 1}}(f,t)_{p,Q}
\ = \ 0, \ \text{for} \ t >0, \ t \in \RR^d_+, \ \text{uniformly for} \ f \in F.
\end{equation}

For $n \in \NN$, we define the space $S_n$ of all piecewise constant functions $f$ on $Q$ such that $f(x) = c_{i,j}$ for $x \in I_{i,j}:=[i/n,(i+1)/n]\times[j/n,(j+1)/n]$, where $c_{i,j}$ are a constant. 
Due to to Lemmas \ref{ApproxbyConst} and \ref{lemma:[W_r><Omega_r]} for each cube $I_{i,j}$  there is a constant $\beta_{ij}$ satisfying
\begin{align} \label{519}
 \|f-\beta_{i,j}\|_{p,I_{i,j}} \le C_2 \Omega_{{\bf 1}}(f,(1/n,1/n))_{p,I_{i,j}}  
\le C_3 W_{{\bf 1}}(f,(1/n,1/n))_{p,I_{i,j}}. 
\end{align}
\indent
Let $P_n(f,x):=\beta_{i,j}$, for $x \in I_{i,j}$. From \eqref{519} and Lemmas \ref{Lemma[IneqW_r(f,t)]} and 
\ref{lemma:[W_r><Omega_r]} we obtain  
 \begin{equation*} 
\begin{split}
 \|f-P_n(f)\|_{p,Q}^p
\ &= \  
\sum_{i,j} \|f-\beta_{i,j}\|_{p,I_{i,j}}^p 
\ \le \ 
C_3 \sum_{i,j} W_{{\bf 1}}(f,(1/n,1/n))_{p,I_{i,j}}^p \\
\ &\le \ 
C_4  W_{{\bf 1}}(f,(1/n,1/n))_{p,Q}^p 
\ \le \ 
C_5 \Omega_{{\bf 1}}(f,(1/n,1/n))_{p,Q}^p. 
\end{split} 
\end{equation*}

Hence, by \eqref{[UniformConvergence]} for arbitrary $\varepsilon>0,$  we can choose $n$ so that 
\begin{equation} \label{[|f-P_n(f)|_{p,Q}<]}
 \|f-P_n(f)\|_{p,Q} \ \le \ \varepsilon,\ f \in F. 
\end{equation} 
Moreover, since $F$ is bounded, so is the set $E:=\{P_n(f): f \in F\}$. Consequently, $E$ is pre-compact as a bounded subset in a finite dimensional subspace. Hence, there is a finite $\varepsilon$-net for $E$, which by 
\eqref{[|f-P_n(f)|_{p,Q}<]} is a finite $2^{1/p}\varepsilon$-net for $F$. This means that  $F$ is pre-compact in $L_p(Q)$.
\end{proof}

We are now able to prove
Theorem \ref{Theorem: Whitney}.

{\it Proof of Theorem  \ref{Theorem: Whitney}} \quad The first inequality in \eqref{ineq:WhitneyIneq} is trivial.
Indeed, if $f \in L_p(Q)$ then for every non-empty $e \subset [d]$ and every $\varphi \in \Pp_r$ we have
\begin{equation*} 
\omega_{r(e)}(f,\delta)_{p,Q}
\  = \ 
\omega_{r(e)}(f - \varphi, \delta)_{p,Q} 
\  \ll \ 
\|f - \varphi\|_{p,Q}.
\end{equation*}
Hence, we obtain the first inequality in \eqref{ineq:WhitneyIneq}.

Let us prove the second inequality. It is sufficient to prove it for $Q = \II^d:= [0,1]^d$. 
Suppose that it is not true. Then for each $n \in \NN$, there would exist a function $f_n \in L_p(\II^d)$ such that
\begin{equation*}
\quad E_r(f_n)_{p,\II^d} \ = \ \|f_n\|_p \ = \ 1, \quad \Omega_r(f_n, {\bf 1})_{p,\II^d} \ \le \ 1/n. 
\end{equation*}
From the convergence $\Omega_r(f_n,t)_{p,\II^d} \to 0, \ t \to 0$, for each $n$,
and the inequality $\Omega_r(f_n, t)_{p,\II^d} \le 1/n$ for all $t \in \RR^d_+, \ n \in \NN$, we can see that
this convergence is uniform in $n$. By Lemma \ref{Lemma[Precompact]} the set 
$F = \{f_n\}_{n=1}^\infty$ is precompact. Therefore, there is a subsequence $\{f_{n_k}\}_{k=1}^\infty$
such that 
\begin{equation*}
f_{n_k} \to f \in L_p(\II^d), \ k \to \infty. 
\end{equation*}
We have $\Omega_r(f, t)_{p,\II^d} = 0, \ t \in \RR^d_+$. This implies that
$ \Delta_h^{r(e)}(f,x) = 0$ 
for all non-empty $e \subset [d]$ and for 
almost all $x,h \in \RR^d$. Then by Lemma \ref{Lemma:a.e.polynomial} 
$f$ is almost everywhere equal to a polynomial in $\Pp_r$.
But this is a contradiction because 
\begin{equation*}
E_r(f)_{p,\II^d}  \ = \ \lim_{n \to \infty} E_r(f_n)_{p,\II^d} \ = \ 1. 
\end{equation*}
$\square$

{\bf Acknowledgements}
\ This research work is funded by Vietnam National Foundation for Science and Technology Development (NAFOSTED) under Grant No. 102.01-2012.15.

\end{document}